\theoremstyle{plain}
\newtheorem{theorem}{Theorem}
\newtheorem*{theorem*}{Theorem}
\newtheorem{lemma}[theorem]{Lemma}
\newtheorem{proposition}[theorem]{Proposition}
\newtheorem*{conjecture*}{Conjecture}
\theoremstyle{definition}
\newtheorem*{definition}{Definition}
\theoremstyle{remark}
\newtheorem*{example}{Example}
\newtheorem*{remark*}{Remark}
\numberwithin{theorem}{section} \numberwithin{equation}{section}
\newcommand{\Z}{\mathbb{Z}}
\newcommand{\spa}{\hspace{5 mm}}
\newcommand{\cc}{\mathscr{C}}
\newcommand{\dd}{\mathscr{D}}
\title{Asymptotic Formulae for Mixed Congruence Stacks} 
\begin{document}
\author{Richard Frnka}
\address{Department of Mathematics \\
Louisiana State University \\
Baton Rouge, LA 70802\\ U.S.A.}
\email{richardfrnka AT outlook.com}

\date{\today}

\keywords{false theta functions; stacks; unimodal sequences; Wright circle method}
\subjclass[2010]{ 05A15, 05A16, 05A17, 11P81, 11P82. }

\maketitle
\begin{abstract}
Much like the important work of Hardy and Ramanujan \cite{HR} proving the asymptotic formula for the partition function, Auluck \cite{Auluck} and Wright \cite{Wright2} gave similar formulas for unimodal sequences. Following the Circle Method of Wright, we provide the asymptotic expansion for unimodal sequences on a two-parameter family of mixed congruence relations, with parts on one side up to the peak satisfying $r \pmod{m}$ and parts on the other side $-r\pmod{m}$. Techniques used in the proofs include Wright's Circle Method, modular transformations, and bounding of complex integrals.
\end{abstract}

\section{Introduction and Statement of Main Results}
Hardy and Ramanujan \cite{HR} provided an asymptotic expansion of the partition function for large $n$, showing the exponential growth of $p(n)$:
 \[
p(n) \sim \frac{1}{2^2 3^{1/2} n} e^{\pi \sqrt{\frac{2n}{3}}}.
\]
When adding restrictions to the parts so that each part is $r \pmod{m}$ and letting $p_{(r,m)}(n)$ count these partitions, an asymptotic formula given by Meinardus \cite{Mein} is 
\[
p_{(r,m)}(n)\sim \left(\Gamma\left(\frac{r}{m}\right) \pi ^{(r/m)-1}2^{-(3/2)-(r/2m)}3^{-(r/2m)}m^{-1/2 + (r/2m)}\right) n^{-\frac12 \left(1+\frac{r}{m}\right)} e^{ \pi \sqrt{\frac{2n}{3m}}}.
\]
With the parts allowed to be $\pm r \pmod{m}$ and letting $p_{(\pm r,m)}(n)$ be the number of partitions, Livingood \cite{Living} showed
\[
p_{(\pm r,m)}(n)  \sim \frac{\csc{\frac{\pi r}{m}}}{4 \pi 3^{\frac14 }m^{\frac14}n^{\frac34}}e^{2\pi \sqrt{\frac{n}{3m}}}.
\]
The generating functions for these partition relations involve infinite $q$-series and manipulations of the Circle Method are used in the proofs. Beckwith and Mertens \cite{BM} have also proven other variations involving the multiplicity of parts appearing in residue classes.
\\ \indent
We extend these ideas to unimodal sequences, where the restriction on non-increasing ordering of parts for partitions is slightly relaxed. \begin{definition} A \emph{unimodal sequence} or \emph{stack} of size $n$ is a sequence of non-zero parts $a_1 \dots a_r$, $c$, and $b_s\dots b_1$ such that
\begin{equation}
\label{s1}
n= \sum_{i=1}^r a_i + c + \sum_{j =1}^s b_j 
\end{equation}
and
\begin{equation}
\label{s2}
1 \leq a_1 \leq a_2 \leq \dots \leq a_r \leq c > b_s \geq \dots \geq b_2 \geq b_1
\end{equation}
\end{definition}
\noindent We let $s(n)$ be the number of stacks of size $n$ and show the $8$ unimodal sequences of size $4$ in Figure \ref{stackPic}.
\begin{figure}[h]
\begin{center}
\includegraphics[scale=.5]{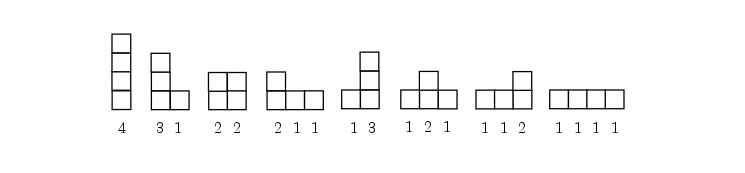} 
\caption{The stacks of size $4$}
\label{stackPic}
\end{center}
\end{figure}
\\
\indent Questions about the asymptotics for the stack function were first introduced by the physicist Temperley \cite{Temp} in 1951, who was studying particle configurations on specific lattices. The entropy of the system depends on the exponential bound of $s(n)$, which had not been previously discovered. This was provided by Auluck in 1951 \cite{Auluck}, when he showed
\begin{equation}
s(n) \sim \frac{1}{2^33^{3/4}n^{5/4}} e^{2\pi\sqrt{\frac{n}{3}}}. \label{stacky}
\end{equation}
This was proved using a different method by Wright in 1971 \cite{Wright2} and he also gave a way to calculate lower order terms, providing the asymptotic expansion in powers of $n^{-\frac12}$. 
\\
\indent
In \cite{Andrews3}, Andrews examined some interesting cases of concave and convex unimodal sequences. He defined a ``mixed parity" case where on one side of the stack the parts are odd, with even parts on the other side. A detailed study of the combinatorial and asymptotic behavior of unimodal sequences with parity conditions will be the subject of a subsequent paper.
\begin{definition}
A \emph{mixed congruence stack} is a unimodal sequence satisfying \eqref{s1} and \eqref{s2}, as well as congruence relations on the parts where for $m>1$, gcd$(r,m)=1$, and $0<r< m/2$,
\[
a_i \equiv r (\text{mod } m),\spa c \equiv r (\text{mod } m), \spa b_j \equiv -r (\text{mod } m)\spa \hbox{for all }i \text{ and } j,
\]  
\end{definition}
Letting $s_{(r,m)}(n)$ be the number of mixed congruence stacks of size $n$, the generating function is
\begin{equation}
\label{MC}
S_{(r,m)}(q) := \sum_{ n \geq 1 } s_{(r,m)}(n) q^n = \sum_{ k \geq 0} \frac{ q^{km+r}}{(q^r;q^m)_{k+1}(q^{m-r};q^m)}_k,
\end{equation}
where we are following the standard $q$-series notation 
\[
(a;q)_n = \prod_{ i =0}^{n-1} (1-aq^i).
\]
\indent The $k$ index in the generating function refers to the height of the peak at $km+r$. Then we have parts up to $km+r$ on the left and parts up to $km-r$ on the right of the stack.  Shown in Figure \ref{MixedS}, we see the mixed stacks for the relation $1 \pmod{4}$ of size $12$. 
\begin{figure}[h]
\centering
\includegraphics[scale=.5]{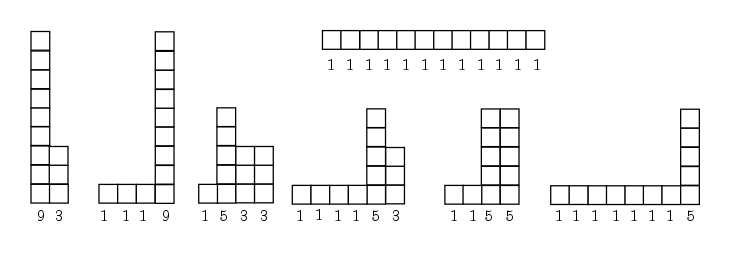}
\caption{Mixed congruence stacks for $s_{(1,4)}(12)$}
\label{MixedS}
\end{figure}

\begin{theorem}
For $0<r<m/2$,  
\label{Theorem1}
\[
s_{(r,m)}(n)  \sim \frac{\csc{\left( \frac{\pi r}{m}\right)}}{2^3 3^{1/4} m^{1/4}n^{3/4}} e^{2\pi\sqrt{\frac{n}{3m}}}
\]
\end{theorem}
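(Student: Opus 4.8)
The plan is to extract the coefficients of $S_{(r,m)}(q)$ by Cauchy's theorem,
\[
s_{(r,m)}(n) = \frac{1}{2\pi i}\oint_{|q|=e^{-t}}\frac{S_{(r,m)}(q)}{q^{n+1}}\,dq = \frac{1}{2\pi}\int_{-\pi}^{\pi}S_{(r,m)}\!\left(e^{-z}\right)e^{nz}\,d\theta, \qquad z := t - i\theta,
\]
and to run Wright's Circle Method with $t = t_n \to 0^+$ chosen at the saddle. First I would recast \eqref{MC} into a form isolating the growth: writing $(q^r;q^m)_{k+1} = (q^r;q^m)_\infty/(q^{(k+1)m+r};q^m)_\infty$ and likewise for the other factor gives
\[
S_{(r,m)}(q) = \frac{T(q)}{(q^r;q^m)_\infty\,(q^{m-r};q^m)_\infty}, \quad T(q) := \sum_{k\ge 0}q^{km+r}(q^{(k+1)m+r};q^m)_\infty\,(q^{(k+1)m-r};q^m)_\infty.
\]
Here the denominator carries the full exponential growth while $T(q)$ will contribute only a constant. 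By the Jacobi triple product the denominator is a theta quotient, $(q^r;q^m)_\infty(q^{m-r};q^m)_\infty = \Theta(q)/(q^m;q^m)_\infty$ with $\Theta(q) = \sum_{n\in\Z}(-1)^n q^{m\binom{n}{2}+rn}$, which is exactly the object governed by modular transformations.

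On the major arc I would compute the $q\to 1$ behavior of each piece. Completing the square, $\Theta(e^{-z}) = e^{\frac{m}{2}c^2 z}\sum_{n\in\Z}(-1)^n e^{-\frac{mz}{2}(n+c)^2}$ with $c = \tfrac{r}{m}-\tfrac12$, and Poisson summation (the Jacobi theta transformation) produces $\Theta(e^{-z}) \sim 2\sin\!\left(\tfrac{\pi r}{m}\right)\sqrt{2\pi/(mz)}\,e^{-\pi^2/(2mz)}$, the sine coming from the two dominant frequencies $k=0,1$ with $\cos(\pi c) = \sin(\pi r/m)$. Together with the Dedekind eta asymptotic $(q^m;q^m)_\infty \sim \sqrt{2\pi/(mz)}\,e^{-\pi^2/(6mz)}$, the square-root factors cancel and
\[
\frac{1}{(q^r;q^m)_\infty(q^{m-r};q^m)_\infty} \sim \frac{1}{2}\csc\!\left(\frac{\pi r}{m}\right)e^{\pi^2/(3mz)}.
\]
For $T(q)$ I would note that near $q=1$ each partial product obeys $\log(q^{(k+1)m\pm r};q^m)_\infty \sim -\tfrac{1}{mz}e^{-(k+1)mz}$, so with $x = e^{-(k+1)mz}$ the summand is $\sim x\exp(-2x/(mz))$; replacing the sum by an integral via $dk = -dx/(mz\,x)$ cancels the factor $x$ and gives $T(q) \sim \tfrac{1}{mz}\int_0^1 e^{-2x/(mz)}\,dx \to \tfrac12$. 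Hence $S_{(r,m)}(e^{-z}) \sim \tfrac14\csc(\pi r/m)\,e^{\pi^2/(3mz)}$ on the major arc.

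It then remains to bound the minor arc and apply Laplace's method. For $\theta$ bounded away from $0$ the products $(q^r;q^m)_\infty,(q^{m-r};q^m)_\infty$ stay away from $0$, so $S_{(r,m)}(q)$ is $O$ of a quantity exponentially smaller than $e^{\pi^2/(3mt)}$ and the minor arc is negligible. On the major arc the integrand is $\sim \tfrac14\csc(\pi r/m)\exp(\pi^2/(3mz)+nz)$; the exponent $\pi^2/(3mz)+nz$ has its saddle at $t = \pi/\sqrt{3mn}$, where it equals $2\pi\sqrt{n/(3m)}$, and the Gaussian evaluation of the $\theta$-integral with $A = \tfrac14\csc(\pi r/m)$, $B = \pi^2/(3m)$ via $[q^n]\,A e^{B/z} \sim \tfrac{A B^{1/4}}{2\sqrt{\pi}}\,n^{-3/4}e^{2\sqrt{Bn}}$ yields precisely the claimed formula, since $A B^{1/4}/(2\sqrt\pi) = \csc(\pi r/m)/(2^3 3^{1/4}m^{1/4})$. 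The main obstacle is the major-arc treatment of $T(q)$: one must show the partial-product sum tends to $\tfrac12$ uniformly for complex $z$ in a sector about the positive real axis (not merely for real $z$), and simultaneously control $T(q)$ on the minor arc, since the final constant $2^{-3}$ hinges entirely on this limit.
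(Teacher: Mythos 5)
Your overall strategy is the right one and matches the paper's in outline (Wright's circle method, modular/theta transformation of $\left(q^r,q^{m-r};q^m\right)_\infty^{-1}$ yielding $\tfrac12\csc\left(\tfrac{\pi r}{m}\right)e^{\pi^2/(3mz)}$, a companion factor tending to $\tfrac12$, saddle at $t=\pi/\sqrt{3mn}$, and a correct final constant check). But there is a genuine gap, and it sits exactly where you flagged it: the factor $T(q)$. The paper does not work with your $T(q)$ at all. It instead invokes an identity from Ramanujan's Lost Notebook (a special case of Heine's transformation, Equation (1.1) of \cite{Andrews}) to write $S_{(r,m)}(q)=F(q)L(q)+R(q)$, where $L(q)=\sum_{n\ge0}(-1)^nq^{mn(n+1)/2-2rn}$ is a \emph{false theta function} and $R(q)$ is a sparse series with coefficients $O(1)$. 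This is the key structural input you are missing: $L(q)$ and $R(q)$ are lacunary series with bounded coefficients, hence trivially bounded (up to powers of $\kappa$) on the \emph{entire} circle, and $L$ admits a rigorous asymptotic expansion with explicit error in a complex sector via the Kim--Kim--Seo lemma. Your $T(q)$, a sum of infinite products, has neither property for free: your major-arc evaluation $T\to\tfrac12$ rests on a sum-to-integral swap with no error term and no uniformity off the positive real axis, and on the minor arc the trivial bound $\left|\left(q^{(k+1)m\pm r};q^m\right)_\infty\right|\le\prod_j\left(1+|q|^{jm+\cdots}\right)\le e^{O(1/(m\kappa))}$ makes $|T(q)|$ exponentially large, of size $e^{cN}$ with $c$ comparable to the minor-arc savings, so the minor-arc contribution is not controlled.

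A second, related underestimate: your claim that on the minor arc the products ``stay away from $0$'' is not correct in the relevant regime. The minor arc includes points with $|q|=e^{-\kappa}$, $\kappa\to0$, and $|\arg q|$ only slightly larger than $\rho\kappa$, as well as neighborhoods of the $m$th roots of unity, where $\left(q^r,q^{m-r};q^m\right)_\infty^{-1}$ is still exponentially large. The paper spends roughly half of the proof of its Proposition 3.4 on precisely this: a logarithmic bound away from roots of unity, and a separate theta-transformation argument at each $m$th root of unity showing the exponent degrades to $\tfrac{\pi^2}{m\kappa}\left(\tfrac13-2\phi(1-\phi)\right)$ with $\phi=\left\{\tfrac{\ell r}{m}\right\}\ne0$, which is where $\gcd(r,m)=1$ is actually used. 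To repair your argument you would either need to carry out this root-of-unity analysis simultaneously for all the tail products inside $T(q)$ and sum the resulting bounds over $k$, or (as the paper does) first trade $T(q)$ for the false theta function $L(q)$ via the Lost Notebook identity, after which everything you outlined goes through.
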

\begin{remark*} Note that Theorem \ref{Theorem1} applies for $0< r< m$, but when $r>m/2$, there is a missing part of the congruence on the right side of the stack. In this case, the term $mk+(m-r)$ is less than the peak at $mk+r$ but is absent, creating a gap. The generating function that fills the gap when $r>m/2$ can be analyzed separately
\begin{equation}
S_{(r,m)}^*(q) := \sum_{ k \geq 0} \frac{ q^{km+r}}{(q^r;q^m)_{k+1}(q^{m-r};q^m)}_{k+1}.
\end{equation}
\end{remark*}
\begin{remark*}
\indent In the case $m=1$, a pole is introduced so it is excluded by adding the requirement $m>1$. Note that the exponential term of Theorem \ref{Theorem1} for $m=1$ matches \eqref{stacky} although the constant terms are different. 
\end{remark*} 
\begin{remark*}
The proof of Theorem \ref{Theorem1} uses the Circle Method developed by Wright in \cite{Wright2} and \cite{WrightAS}. It is important to note that Ngo and Rhoades have recently proven a general version of Wright's approach. Propositions 1.8 - 1.10 of \cite{Road} are widely applicable to  functions of the shape $\xi(q) L(q)$, where $\xi$ has an exponential singularity near $q=1$, and $L(q)$ has an asymptotic expansion. Read in the context of Proposition 1.8 from \cite{Road}, the bulk of the work in Section \ref{lastSection} of this paper is devoted to verifying HYPO 4 from Ngo-Rhoades, as the infinite product that occurs here must be analyzed separately near roots of unity.
\end{remark*}
\begin{example} For the case $m=3$ and $r=1$, we have 
\[
S_{(1,3)}(q)= \sum_{n \geq 1} s_{(1,3)}(n)q^n = \sum_{ k \geq 0} \frac{q^{3k+1}}{(q;q^3)_{k+1} (q^2;q^3)_{k}}.
\]
Letting the first term in our asymptotic formula be
\[
X_{(1,3)}(n) = \frac{1}{2^2 \cdot 3 \cdot n^{3/4}}e^{\left(\frac{2\pi }{3}\sqrt{n}\right)},
\] 
we can see the accuracy of the theorem statement in Table \ref{freshTab} based on relative error.
\begin{table}[h]
\caption{Large values of $s_{(1,3)}(n)$ versus the asymptotic formula}
\label{freshTab}
\[
\begin{array}{|c|c|c|c|}
\hline
 n& s_{(1,3)}(n) & X_{(1,3)}(n) & (X_{(1,3)}(n)-s_{(1,3)})/(s_{(1,3)}(n)) \\
\hline 100 & {3.1671 \times 10^{6}} & {3.2859 \times 10^{6}} & .03751 \\
\hline 1000 &{ 2.6926 \times 10^{25}} & 	{2.7189 \times 10^{25}} & .00977 \\ 
\hline 10000 &{ 7.5714 \times 10^{86}} & {7.5726 \times 10^{86}} &.00016 \\
\hline
\end{array}
\]
\end{table}
\end{example}

\indent The generating function is rewritten in terms of an infinite $q$-series product and a false theta function. A modular transformation is established for the $q$-series and an asymptotic expansion is given for the false theta function. Details are in Section \ref{nextSection}. 
\\
\indent Once we have the transformation formula, Section \ref{lastSection} proceeds to evaluate the coefficients  by Cauchy's formula. Wright's Circle Method is used to give the asymptotic formula for the integral and thus mixed congruence stacks.
\section*{Acknowledgements} This work is a result of the author's Ph.D. Thesis at Louisiana State University. Special thanks to Karl Mahlburg for advising on the project, providing knowledge on the subject, and pointing to excellent references. The author also thanks the two anonymous referees for their many helpful comments and thoroughness in reviewing this paper.
\section{Hypergeometric $q$-series and Theta Functions}\label{nextSection}
As in the work of Wright \cite{Wright2} on ordinary unimodal sequences, we must transform the generating function.
\begin{proposition} 
\begin{align}
S_{(r,m)}(q) &= \frac{1}{(q^r,q^{m-r};q^m)_{\infty}}\sum_{n \geq 0} (-1)^n q^{\frac{mn(n+1)}{2}-2rn} \nonumber \\
 &+\sum_{ n \geq 0} (-1)^{n-1} q^{\frac{mn(3n+1)}{2} -3rn}\left(1-q^{(2n+1)m-2r}\right) .\label{MixedGF}
\end{align}
\end{proposition}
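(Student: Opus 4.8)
The plan is to reduce the claimed identity to a single-variable \(q\)-series identity and then apply a partial-theta transformation. First I would rewrite each finite product in \eqref{MC} as a quotient of infinite products via \((x;q^m)_N = (x;q^m)_\infty/(xq^{mN};q^m)_\infty\), so that
\[
S_{(r,m)}(q) = \frac{1}{(q^r,q^{m-r};q^m)_\infty}\sum_{k\geq 0} q^{km+r}\,(q^{(k+1)m+r};q^m)_\infty\,(q^{(k+1)m-r};q^m)_\infty .
\]
Writing \(T(q)\) for the sum on the right and \(P(q)=(q^r,q^{m-r};q^m)_\infty\), the proposition is equivalent to the identity \(T = L + P\,\Psi\), where \(L=\sum_{n\ge 0}(-1)^n q^{mn(n+1)/2-2rn}\) is the partial theta of the first term and \(\Psi\) is the false theta function of the second term in \eqref{MixedGF}. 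Passing to the base \(Q=q^m\) with \(a=q^r\), the sum becomes the basic hypergeometric series \(\tfrac{a}{1-a}\sum_{k\ge0} Q^k/\big((aQ;Q)_k(Q/a;Q)_k\big)\), which is the natural object to transform.

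The engine of the proof is a transformation of Rogers–Fine type, which is exactly what converts such a series into a one-sided (partial) theta series carrying the tell-tale factor \(1-q^{(2n+1)m-2r}\) and the pentagonal-type exponent \(q^{mn(3n+1)/2}\) that appear in \eqref{MixedGF}. Since the series has two lower parameters rather than one, I would first bring it into applicable shape — either by Heine's transformation, or by expanding one of the tail products through the \(q\)-binomial theorem, interchanging the order of summation, and evaluating the resulting inner series — before invoking the Rogers–Fine identity. The output should be a single partial theta series in \(Q\) and \(a\).

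To finish, I would separate this partial theta series into the two advertised pieces. Euler's pentagonal number theorem together with the Jacobi triple product should identify the portion that completes to the clean partial theta \(L\), which then carries the prefactor \(1/P\), while the remaining one-sided sum is precisely the false theta \(\Psi\) with no prefactor; matching the quadratic exponents and the alternating signs term by term would then confirm \eqref{MixedGF}.

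I expect the main obstacle to be the second step: reducing the doubly bounded series (two Pochhammer symbols in the denominator) to a form on which a single partial-theta transformation acts, and then performing the reindexing so that the complete-theta part splits off cleanly and the infinite product \(1/P\) attaches to exactly one of the two resulting sums. Some care is also needed to justify the interchange of summations and the rearrangements, all of which are valid for \(|q|<1\).
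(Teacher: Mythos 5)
Your setup is sound and your instincts about the relevant tools are good, but the argument has a genuine gap at its center: the step that actually produces the right-hand side of \eqref{MixedGF} is never carried out. After you extract the prefactor $1/(q^r,q^{m-r};q^m)_{\infty}$ from \eqref{MC} and pass to $Q=q^m$, $a=q^r$ (both of which are correct), everything hinges on transforming $\frac{a}{1-a}\sum_{k\ge 0}Q^k/\bigl((aQ;Q)_k(Q/a;Q)_k\bigr)$ into ``partial theta with prefactor'' plus ``false theta without prefactor,'' and you explicitly defer this step and flag it as the main obstacle. That transformation \emph{is} the proposition; without it you have a plan, not a proof. The missing ingredient is that the series you arrive at is precisely a two-variable generating function from Ramanujan's Lost Notebook: Equation (1.1) of \cite{Andrews} states
\[
1+\sum_{n\ge 1}\frac{q^n}{(aq;q)_n(a^{-1}q;q)_n}=(1+a)\sum_{n\ge 0}a^{3n}q^{\frac{n}{2}(3n+1)}\left(1-a^2q^{2n+1}\right)-a\sum_{n\ge 0}\frac{(-1)^n a^{2n}q^{\frac{n}{2}(n+1)}}{(aq;q)_{\infty}(a^{-1}q;q)_{\infty}},
\]
and the proposition is exactly this entry under $q\mapsto q^m$, $a\mapsto -q^{-r}$, multiplied by $q^r/(1-q^r)$; this one-line specialization is the paper's entire proof. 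This identity is itself a consequence of Heine's transformation (equivalently, it can be reached via Rogers--Fine after a Heine step), so your proposed route is the standard way to \emph{prove} the cited entry --- but then that derivation has to actually appear, with the sign bookkeeping done, rather than being gestured at.

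A second, smaller problem is your concluding step. You propose to obtain the two sums of \eqref{MixedGF} by splitting a single partial theta output using Euler's pentagonal number theorem and the Jacobi triple product. That is not how the identity decomposes: the transformation above already delivers two separate sums, one of which carries the infinite-product denominator (and becomes $F(q)L(q)$) while the other is product-free (and becomes $R(q)$, with its factor $1-q^{(2n+1)m-2r}$ and pentagonal-type exponent arising directly from the $(1-a^2q^{2n+1})$ and $q^{\frac{n}{2}(3n+1)}$ terms). Neither the pentagonal number theorem nor the triple product plays any role, and attempting to manufacture the prefactor $1/(q^r,q^{m-r};q^m)_{\infty}$ on only one of two pieces carved out of a single one-sided sum would not succeed.
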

\begin{proof}
One of the equations from Ramanunjan's Lost Notebook representing the unimodal-rank generating function is explored in depth in \cite{Andrews} as Equation (1.1): 
\begin{align}
1 + \sum_{ n \geq 1} \frac{q^n}{(aq;q)_n(a^{-1}q;q)_n} &= (1+a) \sum_{ n \geq 0} a^{3n} q^{ \frac{n}{2} (3n+1)} \left( 1-a^2 q^{2n+1}\right)\nonumber \\
&\spa \spa \spa \spa - a \sum_{ n \geq 0}\frac{ (-1)^n a^{2n} q^{\frac{n}{2}(n+1)}}{(aq;q)_{\infty}(a^{-1}q;q)_{\infty}}.\label{supere}
\end{align}
This equation is a special case of Heine's transformation formula for hypergeometric series.
Sending $q \mapsto q^m$ and $a \mapsto -q^{-r}$, multiplying by $\frac{q^r}{(1-q^r)}$ and reorganizing in \eqref{supere} completes the proof.
\end{proof}
We wish to analyze individually the different pieces of the right hand side of \eqref{MixedGF}, so we let 
\[
S_{(r,m)}(q ) = F(q)L(q) + R(q)
\]
where 
\begin{align*}
F(q) := &(q^r,q^{m-r};q^m)_{\infty}^{-1}, \spa L(q) := \sum_{n \geq 0} (-1)^n q^{\frac{mn(n+1)}{2}-2rn},\\
\text{and }\spa&R(q) := \sum_{ n \geq 0} (-1)^{n-1} q^{\frac{mn(3n+1)}{2} -3rn}\left(1-q^{(2n+1)m-2r}\right)
\end{align*}
The $R(q)$ term only offers a contribution of $\pm 1$ for high order terms, so letting $H(q):= F(q)L(q)$ be the main term, we have that $s_{(r,m)}(n) \sim h(n)$ for $H(q) = \sum_{ n \geq 1}h(n)q^n$.
\\
\indent With the generating series in this particular form, we can turn our attention to each individual piece. The sum $L(q)$ is a false theta function whose coefficients can be bounded, and the infinite $q$-series satisfies a modular transformation formula. The transformation formula will provide a first term in the asymptotic expansion of $F(q)$ that will be vital for the circle and function over which we will be integrating. 
\begin{proposition} For $q=e^{2\pi i \tau}$ and $0<r <m/2$, as $\tau \to 0$,
\begin{align}
\frac{1}{\left( q^r,q^{m-r}; q^m\right)_{\infty}} = \frac{1}{2\sin( \frac{\pi(m-r)}{m})} e^{\left( \frac{-\pi i r(m-r)}{m} + \frac{ \pi i m}{6}\right)\tau + \frac{ \pi i}{6m \tau}}
 \left(1+O\left( e^{\frac{-2\pi i}{m \tau}}\right)\right).
\end{align}
\label{PROP1}

\end{proposition}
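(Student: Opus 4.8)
The plan is to reduce the infinite product to a ratio of a Dedekind eta function and a classical theta function, and then apply the two corresponding modular (Poisson summation) transformations; the exponentially small error in the statement will emerge automatically as the ratio of the subdominant to the dominant term of the transformed theta series.

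First I would apply the Jacobi triple product identity $\sum_{n\in\Z}(-1)^n z^n q^{\binom{n}{2}} = (z;q)_\infty(q/z;q)_\infty(q;q)_\infty$ with $q\mapsto q^m$ and $z=q^r$. This rewrites the denominator as
\[
(q^r,q^{m-r};q^m)_\infty = \frac{1}{(q^m;q^m)_\infty}\sum_{n\in\Z}(-1)^n q^{\frac{mn(n-1)}{2}+rn},
\]
so that the left-hand side of the Proposition equals $(q^m;q^m)_\infty/\Theta(\tau)$, where $\Theta(\tau)$ is the theta sum above. For the numerator I would use $(q^m;q^m)_\infty = e^{-\pi i m\tau/12}\eta(m\tau)$ together with the transformation $\eta(m\tau)=\sqrt{i/(m\tau)}\,\eta(-1/(m\tau))$ and the product expansion of $\eta$ at the cusp, which contributes a factor $\sqrt{i/(m\tau)}\,e^{-\pi i m\tau/12-\pi i/(12m\tau)}$ up to $1+O(e^{-2\pi i/(m\tau)})$.

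The heart of the argument is the transformation of $\Theta(\tau)$. Completing the square I would write $\Theta(\tau)=\sum_n \exp\!\big(\pi i m\tau n^2+(2\pi i\tau(r-\tfrac m2)+\pi i)n\big)$ and apply the Gaussian/Poisson formula $\sum_n e^{\pi i a n^2+2\pi i b n}=(-ia)^{-1/2}\sum_\ell e^{-\pi i(\ell+b)^2/a}$ with $a=m\tau$ and $b=\tau(r-\tfrac m2)+\tfrac12$. As $\tau\to 0$ the summand is governed by $e^{-\pi i(\ell+b)^2/(m\tau)}$, whose modulus is largest for $\ell=0$ and $\ell=-1$ (both giving $(\ell+\tfrac12)^2=\tfrac14$), while every other index is smaller by a factor $O(e^{-2\pi i/(m\tau)})$; this is precisely the source of the claimed error. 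Adding the $\ell=0$ and $\ell=-1$ terms produces the common factor $\exp\!\big(-\pi i\tau(r-\tfrac m2)^2/m-\pi i/(4m\tau)\big)$ times $e^{-\pi i(r-m/2)/m}+e^{\pi i(r-m/2)/m}=2\sin(\pi r/m)=2\sin(\pi(m-r)/m)$.

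Finally I would assemble $F(q)=(q^m;q^m)_\infty/\Theta(\tau)$. The two square-root prefactors cancel since $(i/(m\tau))(-im\tau)=1$, leaving the constant $1/(2\sin(\pi(m-r)/m))$; collecting the $1/\tau$ exponents gives $-\tfrac{\pi i}{12m\tau}+\tfrac{\pi i}{4m\tau}=\tfrac{\pi i}{6m\tau}$, and collecting the linear exponents gives $\pi i(\tfrac{r^2}{m}-r+\tfrac m6)\tau=(\tfrac{-\pi i r(m-r)}{m}+\tfrac{\pi i m}{6})\tau$, matching the statement. I expect the main obstacle to be bookkeeping the branch of $(-im\tau)^{1/2}$ so that the prefactor is the positive quantity $1/(2\sin(\pi(m-r)/m))$ with no residual phase (checking this on $\tau=iy$ and extending by continuity into the upper half-plane), together with verifying that the two error contributions---the tail of the $\eta$-product and the subdominant theta indices $\ell\neq 0,-1$---are each uniformly $O(e^{-2\pi i/(m\tau)})$ as $\tau\to 0$, so that they combine into the single error factor claimed. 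An alternative route through the Mellin transform of $\log F$ recovers the same main terms (residues at $s=1,0,-1$, with all residues at $s\le -2$ vanishing because $\zeta(-2n,\tfrac rm)+\zeta(-2n,1-\tfrac rm)=0$), but that route makes the exponentially small remainder harder to exhibit, so I would favor the theta transformation.
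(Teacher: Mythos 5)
Your proposal is correct and follows essentially the same route as the paper: both convert the infinite product into an eta-over-theta quotient via the Jacobi triple product identity and then apply the modular inversion, with the $2\sin(\pi r/m)$ prefactor and the $O\left(e^{-2\pi i/(m\tau)}\right)$ error arising from the leading behaviour of the transformed theta series. The only cosmetic difference is that you perform the Poisson summation by hand and identify the two dominant dual indices $\ell=0,-1$, whereas the paper invokes the packaged transformation law for $\Theta(w;\tau)$ and reads the same information off the first factors of the transformed triple product.
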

\begin{proof}

The function $F(q)$ can be expressed in terms of Jacobi's theta function (see \cite{Dave} for a good reference), which is 
\begin{equation}
 \Theta(w; \tau) := \sum_{n \in \frac12 +\Z} e^{\pi i n^2 \tau + 2\pi i n ( w + \frac12)},
\end{equation}
and has an equivalent product form by the Triple Product Identity, 
\begin{equation}
\Theta(w;\tau) = -i q^{\frac18} e^{-\pi i w}(q;q)_{\infty} (e^{2\pi i w}; q)_{\infty} (e^{-2\pi i w}q;q)_{\infty}.
\label{jacobi}
\end{equation}
The theta function satisfies the following  transformation formulas:
\begin{align}
\Theta(-w;\tau) &= - \Theta(w;\tau),\\
\Theta\left(\frac{w}{\tau}; -\frac{1}{\tau} \right) &= -i \sqrt{-i\tau} e^{\frac{ \pi i w^2}{\tau}}\Theta(w;\tau).
\label{thetatrans}
\end{align}
\noindent The Dedekind eta function satisfies the inversion formula 
\begin{equation}
\label{etatrans}
\eta\left(-\frac{1}{\tau}\right)  = \sqrt{-i \tau} \eta(\tau). 
\end{equation}
We can express our function in terms of the theta functions under a nice transformation.  \\
\indent 
We have for  $m\geq 2$ and $ 1\leq r< \frac{m}{2} $, that 
\begin{equation}
\label{prelem}
F(q) = \left( q^r,q^{m-r};q^m\right)_{\infty}^{-1} = -iq^{\frac{m}{12} -\frac{r}{2}} \frac{\eta(m \tau)}{\Theta(r\tau; m\tau)}  .
\end{equation}
\noindent Letting $w \to r\tau$ and $\tau \to m \tau$ in \eqref{thetatrans}, 
\begin{equation}
\Theta(r\tau; m\tau) = \frac{ i }{\sqrt{-im \tau}} e^{-\frac{\pi i r^2\tau}{m}} \Theta\left( \frac{r}{m};-\frac{1}{m \tau}\right),
\end{equation}
where by plugging in $\frac{r}{m}$ for $w$ and $\frac{-1}{m \tau}$ for $\tau$ in \eqref{jacobi}, we have 
\begin{equation}
\Theta\left( \frac{r}{m};\frac{-1}{m \tau}\right)  = -i e^{-\frac{\pi i}{4m \tau} - \frac{\pi i r}{m}} \left( e^{-\frac{2 \pi i }{m \tau}};e^{-\frac{2 \pi i }{m \tau}}\right)_{\infty} \left( e^{ \frac{2\pi ir }{m}}; e^{ -\frac{2 \pi i }{m \tau}}\right)_{\infty} \left( e^{ - \frac{ 2\pi i }{m}\left( r + \frac{1}{\tau}\right)};e^{ -\frac{2 \pi i }{m \tau}}\right)_{\infty}.
\label{thets}
\end{equation}

\noindent Using the inversion formula \eqref{etatrans}, we also have that 
\begin{equation}
\eta(m \tau) = \frac{1}{\sqrt{-im\tau}} e^{\frac{ - \pi i}{12m \tau}} \left( e^{\frac{-2\pi i }{m \tau}};e^{\frac{-2\pi i }{m \tau}}\right)_{\infty}.
\label{newded}
\end{equation}
Finally, plugging \eqref{thets} and \eqref{newded} into \eqref{prelem}, we have 

\begin{equation}
F(q)= \frac{e^{\pi i \tau \left( \frac{m}{6}-r + \frac{r^2}{m}\right) + \frac{\pi i }{6m \tau}}} {i e^{\frac{-\pi i r}{m}}\left( 1- e^{\frac{2\pi i r}{m}}\right)\left( e^{\frac{2\pi i r}{m} - \frac{2\pi i r}{m \tau}}, e^{\frac{-2\pi i r}{m}- \frac{2\pi i }{m \tau}}; e^{\frac{-2\pi i}{m \tau}} \right)_{\infty}}.
\end{equation}
\end{proof}
\indent 
False theta functions do not satisfy the same functional equations as the typical theta function, although they have the same shape. Euler-Maclaurin summation can be used to expand $L(q)$ out in desired powers of $q$, but we use a result found in
\cite{LSource}, where the authors explored the function
\begin{equation}
f_{a,b}(\tau) = \sum_{ n \geq 1} (-1)^n q^{(an^2 +bn)/2}
\label{fab}
\end{equation} 
and showed the following:
\begin{lemma}[Kim, Kim, Seo (2015)]\label{BB}\spa
For $\tau = x+i y$, as $y \to 0^+$ with $|x|\leq y \leq \frac{\sqrt{3}}{8}$,
\begin{equation}
\left| f_{a,b}(\tau) - \left( -\frac{1}{2} + \frac{b}{8}(-2 \pi i \tau) + \frac{ab}{32}(-2\pi i \tau)^2 + \frac{b(6a^2-b^2)}{384}(-2\pi i \tau)^3 \right)\right| < cy^4.
\end{equation}
where $c=105\pi^4a^4b^{14} e^{\frac{\pi \sqrt{3}b^2}{32a}}$.
\end{lemma}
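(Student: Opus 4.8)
The plan is to read $f_{a,b}$ as an alternating exponential sum and to extract its behaviour as $\tau\to 0$ by the alternating analogue of Euler--Maclaurin summation (Boole summation). Writing $q=e^{2\pi i\tau}$ and setting $t:=-2\pi i\tau$, each summand of \eqref{fab} becomes $e^{-t(an^2+bn)/2}$, so that $f_{a,b}(\tau)=\sum_{n\ge 1}(-1)^n e^{-t(an^2+bn)/2}$. Since $\tau=x+iy$ with $|x|\le y$, we have $\re(t)=2\pi y>0$ and $|t|\le 2\sqrt2\,\pi y$, so $\re(t)$ and $|t|$ are comparable and both of size $y$. I would put $F(u):=e^{-t(au^2+bu)/2}$, a smooth function with rapid decay at $+\infty$, and write $f_{a,b}(\tau)=\sum_{n\ge 0}(-1)^n F(n)-1$, the $-1$ removing the spurious term $F(0)=1$.

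First I would pin down the polynomial part. Boole summation represents $\sum_{n\ge 0}(-1)^nF(n)$ as the finite combination $\tfrac12\sum_{k}\tfrac{E_k(0)}{k!}F^{(k)}(0)$ of derivatives of $F$ at the origin (the $E_k$ being Euler polynomials), plus a remainder integral. Equivalently, expanding $e^{-t(an^2+bn)/2}$ in powers of $t$, the coefficient of $t^j$ is controlled by the regularised alternating power sums $\sum_{n\ge 1}(-1)^n n^{\ell}=-\eta(-\ell)=-(1-2^{\ell+1})\zeta(-\ell)$, whose values for $\ell=0,1,\dots,6$ are $-\tfrac12,\,-\tfrac14,\,0,\,\tfrac18,\,0,\,-\tfrac14,\,0$. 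Substituting these into the expansion and collecting powers reproduces exactly
\[
-\tfrac12+\tfrac{b}{8}t+\tfrac{ab}{32}t^2+\tfrac{b(6a^2-b^2)}{384}t^3,
\]
and restoring $t=-2\pi i\tau$ gives the four displayed terms of Lemma \ref{BB}. I would carry this out as an explicit finite computation to fix all the constants.

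The delicate point is the remainder. Because the exponent $-t(au^2+bu)/2$ is \emph{quadratic} in $u$, differentiating $F$ can only gain a half power of $y$ per derivative: a count shows that each monomial occurring in $F^{(m)}$ —built, via $F=e^{g}$ with $g(u)=-t(au^2+bu)/2$, from $g'(u)=-t(au+\tfrac b2)$ and $g''=-ta$— contributes, after completing the square in $au^2+bu=a(u+\tfrac{b}{2a})^2-\tfrac{b^2}{4a}$ and integrating the resulting Gaussian against the bounded periodic Euler function, a quantity of order $y^{(m-1)/2}$, uniformly on the cone. Hence the four terms above are \emph{not} a plain truncation; to obtain a remainder of size $y^4$ from the naive absolute-value bound one carries the Boole expansion to ninth order, moves every genuinely $O(y^4)$ piece (the surviving $t^{\ge 4}$ contributions of the main terms) into the error, and estimates the order-nine remainder $\tfrac{1}{2\cdot 8!}\int_0^\infty F^{(9)}(u)\,\widetilde E_8(-u)\,du$.

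The main obstacle is then tracking the explicit constant $c=105\pi^4a^4b^{14}e^{\pi\sqrt3\,b^2/(32a)}$. Its two structural features are transparent and guide the estimate: the power $\pi^4$ reflects $|t|^4\asymp(2\pi)^4|\tau|^4$, while the exponential factor is precisely the bound $e^{\pi y b^2/(4a)}\le e^{\pi\sqrt3\,b^2/(32a)}$ coming from completing the square and invoking the hypothesis $y\le\tfrac{\sqrt3}{8}$. The remaining numerical and parameter factor $105\,a^4b^{14}$ must be produced by bounding, term by term, the polynomial prefactors appearing in $F^{(9)}$ against the explicit Gaussian moments $\int_0^\infty u^p e^{-\pi y a(u+b/(2a))^2}\,du$ and absorbing everything into $c$. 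This last step is routine but genuinely laborious bookkeeping, and it is the only part of the argument that is not a short computation.
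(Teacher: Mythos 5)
First, a point of orientation: the paper does not prove Lemma \ref{BB} at all --- it is imported verbatim from Kim--Kim--Seo \cite{LSource} --- so there is no internal proof to compare you against, and your proposal must be judged on its own terms. Your strategy (Boole summation for $\sum_{n\ge1}(-1)^n e^{-t(an^2+bn)/2}$ with $t=-2\pi i\tau$, the polynomial part read off from the Abel-regularised values $\sum_{n\ge1}(-1)^n n^{\ell}=-\eta(-\ell)$) is the standard route to explicit asymptotics of partial theta functions and is in the spirit of the cited source. Your bookkeeping checks out: with $\eta(0)=\tfrac12$, $\eta(-1)=\tfrac14$, $\eta(-3)=-\tfrac18$, $\eta(-5)=\tfrac14$ and $\eta$ vanishing at negative even integers, expanding $(an^2+bn)^j$ and collecting powers of $t$ does reproduce $-\tfrac12+\tfrac b8 t+\tfrac{ab}{32}t^2+\tfrac{b(6a^2-b^2)}{384}t^3$ exactly; the identification of the factor $e^{\pi\sqrt3\,b^2/(32a)}$ as $|e^{tb^2/(8a)}|$ after completing the square, evaluated at $y=\tfrac{\sqrt3}{8}$, is also correct; and the observation that each derivative of $e^{-t(au^2+bu)/2}$ gains only half a power of $y$, forcing the Boole expansion to order nine, is the genuinely non-obvious structural point and you have it right.

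That said, as written this is a plan rather than a proof, and the gap sits exactly where the lemma's content is. Beyond the formal expansion, the statement is a non-asymptotic inequality with the explicit constant $c=105\pi^4a^4b^{14}e^{\pi\sqrt3\,b^2/(32a)}$ valid on the whole range $y\le\tfrac{\sqrt3}{8}$, and that is precisely the step you defer as ``routine but laborious bookkeeping.'' Until the order-nine remainder integral and the discarded $t^{\ge4}$ pieces of the lower Boole terms are actually bounded by $c\,y^4$ with this $c$ (or the lemma is restated with a constant you can certify), the result is asserted, not proved. Two smaller points to make explicit if you complete the argument: the interchange producing the $\eta$-values is only formal, so the coefficients must be extracted from the Boole terms $\tfrac{E_k(0)}{2\,k!}F^{(k)}(0)$ themselves (only $k\le 7$ odd, plus $k=0$, contribute to degrees $\le 3$ in $t$, since $E_k(0)=0$ for even $k\ge2$); and the uniformity over the cone $|x|\le y$ rests on $\re(t)=2\pi y\ge |t|/\sqrt2$, which should be invoked where you estimate the Gaussian moments.
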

If we let $a \mapsto m, b \mapsto (-4r-m)$ in \eqref{fab}, then 
\begin{equation}
 L(q) = \sum_{n \geq 0} (-1)^{n} q^{-2rn + \frac{mn}{2}(n+1)} = -q^{2r}f_{m,-m-4r}(q). 
\end{equation}
Now letting $z = -2\pi i \tau$ and $x=e^{-z}$, we can find constants for  the false theta function in powers of $z$ up to a desired degree $k$, as in Lemma 1 of \cite{Wright2}.
\begin{lemma} For $|\text{arg}(z)|< \frac{\pi}{2} - c$ where $c$ is some arbitrary constant, taking  $|z| \to 0$ gives 
\begin{equation}
L(x) = \alpha_0 -\sum_{s=1}^{k-1} \alpha_sz^s + O(z^k),
\end{equation}
\label{LEM}

where 
\[
\alpha_s \sim  \frac{(4r+m)(m^{s-1})}{2^{2s+1}}\hbox{for } 1 \leq s \leq k-1, \hbox{ and }  \alpha_0 := \frac12.
\]
\end{lemma}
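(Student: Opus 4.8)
The plan is to read the expansion of $L$ directly off the two factors in the identity already established above, namely $L(q) = -q^{2r} f_{m,-m-4r}(q)$, where throughout $q = x = e^{-z}$ and $z = -2\pi i \tau$. Since $q^{2r} = e^{-2rz}$ is entire in $z$ with the trivial Taylor series $\sum_{i \geq 0} \frac{(-2r)^i}{i!} z^i$, the whole problem reduces to producing an asymptotic expansion of the false theta function $f_{m,-m-4r}$ in powers of $z$ to order $k$ and then forming the Cauchy product of the two series. First I would record this reduction: writing $c_j$ for the coefficient of $z^j$ in the expansion of $f_{m,-m-4r}$, matching $L(x) = \alpha_0 - \sum_{s=1}^{k-1} \alpha_s z^s + O(z^k)$ against $-e^{-2rz}\sum_j c_j z^j$ gives $\alpha_0 = -c_0$ and $\alpha_s = \sum_{i+j=s} \frac{(-2r)^i}{i!} c_j$ for $1 \leq s \leq k-1$. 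In particular $c_0 = -\tfrac12$ yields $\alpha_0 = \tfrac12$ at once.

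The core step is to extend Lemma \ref{BB} from order three to arbitrary order $k$, following the Euler--Maclaurin computation of Lemma 1 in \cite{Wright2}. Writing $f_{a,b}(\tau) = \sum_{n \geq 1} (-1)^n e^{-(an^2+bn)z/2}$ and expanding the summand formally in $z$ produces a power series whose $z^s$-coefficient is a linear combination of the alternating power sums $\sum_{n \geq 1} (-1)^n n^p$ with $s \leq p \leq 2s$. I would regularize these by Boole (alternating Euler--Maclaurin) summation, which identifies them with the values $-(1-2^{1+p})\zeta(-p)$; the key structural fact is that $\zeta(-p) = 0$ for every even $p \geq 2$. Because the top power $t^{2s}$ in $(at^2+bt)^s$ carries the even exponent $p = 2s$, its contribution drops out, so $c_s$ is a polynomial in $a$ and $b$ whose leading term in $a$ is the $a^{s-1}b$ term, with coefficient $-\frac{(-1/2)^s}{(s-1)!}(1-2^{2s})\zeta(1-2s)$. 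A direct check against Lemma \ref{BB} confirms $c_1 = \frac{b}{8}$, $c_2 = \frac{ab}{32}$, and the $a^2 b$ part of $c_3$.

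With the $c_j$ in hand, I would substitute $a = m$ and $b = -(m+4r)$ and form the Cauchy product against $e^{-2rz}$ to obtain each $\alpha_s$ explicitly; retaining the dominant contribution reproduces the displayed shape $\alpha_s \sim \frac{(4r+m)m^{s-1}}{2^{2s+1}}$, the finitely many lower-order-in-$a$ terms being absorbed into the same asymptotic. It should be emphasized that only $\alpha_0 = \tfrac12$ is needed for the circle-method argument in Section \ref{lastSection}, so the precise higher coefficients enter only through their order of magnitude.

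The main obstacle is analytic rather than algebraic: Lemma \ref{BB} is stated only to order three and only on the cone $|x| \leq y$, whereas Lemma \ref{LEM} demands an expansion to arbitrary order $k$ that is uniform on the wider sector $|\arg z| < \frac{\pi}{2} - c$. The condition $|\arg z| < \frac{\pi}{2} - c$ guarantees that $\re(z) = 2\pi y$ stays bounded below by a fixed multiple of $|z|$, so the summand $e^{-(an^2+bn)z/2}$ decays and the Boole remainder can be controlled; the real work is to bound this remainder, after expansion to order $k$, uniformly in the sector by an estimate of the form $O(z^k)$, upgrading the $O(y^4)$-type tail bound of \cite{LSource} to the required $z$-power statement over the larger angular range. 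Once the uniform remainder estimate is in place, multiplication by the entire factor $e^{-2rz}$ preserves the order of the error, and the passage from $f_{m,-m-4r}$ to $L$ completes the proof.
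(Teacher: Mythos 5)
Your proposal follows essentially the same route as the paper, which in fact offers no written proof of this lemma: it simply invokes the Kim--Kim--Seo expansion of $f_{a,b}$ (Lemma \ref{BB}) under the substitution $a\mapsto m$, $b\mapsto -m-4r$, together with the identity $L(q)=-q^{2r}f_{m,-m-4r}(q)$ and a pointer to Lemma 1 of \cite{Wright2} for the extension to arbitrary order $k$. Your write-up supplies exactly the details the paper leaves implicit (the Cauchy product with $e^{-2rz}$, the Boole-summation extension to order $k$, and the genuine need to enlarge the sector from $|\arg z|\leq \pi/4$ to $|\arg z|<\tfrac{\pi}{2}-c$), so it is a correct and somewhat more careful version of the same argument.
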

\section{Wright's Circle Method and Proof of Theorem \ref{Theorem1}}\label{lastSection} The proof follows the notation and method of Wright in \cite{Wright2}.  We let $x=e^{-z}$ replace $q=e^{2\pi i \tau}$ and send $\tau \mapsto \frac{iz}{2\pi }$. Then the modular transformation in Proposition \ref{PROP1} becomes
\begin{equation}
F(x) = \frac{1}{2\sin\left( \frac{\pi r}{m}\right)} e^{-\left( \frac{m}{12}+\frac{r^2}{2m} - \frac{r}{2}\right)z + \frac{\pi^2}{3mz} }\left( 1+ O\left( e^{\frac{-4\pi^2}{mz}}\right) \right).\label{NEWINV}
\end{equation}
Defining 
\[
w(z) :=  \frac{1}{2\sin\left( \frac{\pi r}{m} \right) }e^{\left( \frac{r(m-r)}{2m} -\frac{m}{12}\right) z + \frac{ \pi^2 }{3mz}},
\]
the following term is critical for the Circle Method
\[
w(z)x^{-n} = \frac{1}{2\sin\left( \frac{\pi r}{m} \right) } e^{\left( \frac{r(m-r)}{2m} -\frac{m}{12} +n\right) z + \frac{ \pi^2 }{3mz}}.
\]
Using the Arithmetic-Geometric Mean Inequality on the exponent above gives
\[
\left( \frac{r(m-r)}{2m} -\frac{m}{12} +n\right) z + \frac{ \pi^2 }{3mz} \geq 2 \sqrt{\frac{\pi^2}{3m} \left(\frac{r(m-r)}{2m} -\frac{m}{12} +n\right)},
\]
where equality occurs when 
$z= \kappa$ with
\[
\kappa := \frac{ \pi }{\sqrt{\frac{3r(m-r)}{2} - \frac{m^2}{4} +3mn}}.
\]
\\
\indent 
The circle $\cc$ of radius $e^{-\kappa}$ centered at $0$ will be the contour of integration. The major arc $\cc_1$ is defined to be where $|$arg$(x)| \leq \rho \kappa$ for small fixed $\rho >0$. The minor arc is the rest of the circle $\cc_2 = \cc-\cc_1$, away from the pole at $x=1$. 
\\
\indent 
We can now use the Circle Method over $\cc$, establishing the integral
\begin{equation}
h(n) = \frac{1}{2\pi i } \int_{\cc} \frac{L(x)x^{-(n+1)}}{\left(x^r;x^m\right)_{\infty}\left(x^{m-r};x^m\right)_{\infty}} dx,
\end{equation} which will be approximated by $\sum_{s\geq 1} \alpha_s h_s(n)$ with the constants $\alpha_s$ from Lemma \ref{LEM} and 
\begin{equation}
h_s(n) := \frac{1}{2\pi i } \int_{\cc_1} \frac{z^sw(z)}{x^{n+1}}dx.
\end{equation}

The main term of each $h_s(n)$  will be shown to be $e^{2N}$ where
\[
N := \frac{\pi^2}{3m \kappa} = \pi\sqrt{\frac{r(m-r)}{6m^2}-\frac{1}{36}+\frac{n}{3m}}.
\]
\begin{proposition}
\label{PROP2} For $m \geq 3$, $0 \leq r< m$, and as $n \to \infty$, 
\[
h(n) = \sum_{ s=0}^{k-1} \alpha_s h_s(n) + O\left(N^{-k}e^{2N}\right).
\]
\end{proposition}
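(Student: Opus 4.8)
The plan is to begin from the Cauchy integral representation of $h(n)$ over the circle $\cc$ of radius $e^{-\kappa}$ and to exploit the splitting $\cc = \cc_1 \cup \cc_2$ into the major arc $\cc_1$ (where $|\arg(x)| \le \rho\kappa$) and the minor arc $\cc_2$. The entire main term will be extracted from $\cc_1$, while $\cc_2$ will be shown to be exponentially negligible compared with $e^{2N}$. Throughout I parametrize $x = e^{-z}$ with $\re(z)=\kappa$, so that on $\cc_1$ the variable $z$ stays in a thin sector about the positive real axis; this keeps us within the hypothesis $|\arg z| < \frac{\pi}{2}-c$ of Lemma \ref{LEM} and within the regime where Proposition \ref{PROP1} is valid. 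I will also use repeatedly the relation $N = \pi^2/(3m\kappa)$, which gives $|z| \asymp \kappa \asymp N^{-1}$ on $\cc_1$ and converts powers of $z$ into powers of $N$. Note that the integrals $h_s(n)$ remain the truncated major-arc integrals as defined; their explicit evaluation is a separate step, so here one only needs the reduction.

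On $\cc_1$ I would substitute the two available expansions into the integrand: Proposition \ref{PROP1} in the form \eqref{NEWINV}, replacing $F(x)$ by $w(z)\bigl(1 + O(e^{-4\pi^2/(mz)})\bigr)$, and Lemma \ref{LEM}, replacing $L(x)$ by $\alpha_0 - \sum_{s=1}^{k-1}\alpha_s z^s + O(z^k)$. Multiplying these and using the definition of $h_s(n)$ assembles the leading terms into $\sum_{s=0}^{k-1}\alpha_s h_s(n)$, leaving two error contributions. The controlling one comes from the $O(z^k)$ tail of $L$: since $|z| \asymp \kappa \asymp N^{-1}$ on $\cc_1$, while $\int_{\cc_1} |w(z)x^{-(n+1)}|\,|dx| = O(e^{2N})$ (the exponent $\left(\frac{r(m-r)}{2m}-\frac{m}{12}+n+1\right)z + \frac{\pi^2}{3mz}$ has real part $2N$ at the saddle $z=\kappa$ and is smaller elsewhere), this term is $O(N^{-k}e^{2N})$, exactly the stated remainder. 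The error from the factor $O(e^{-4\pi^2/(mz)})$ is harmless: on $\cc_1$ one has $\re(1/z) \ge 1/(\kappa(1+\rho^2))$, whence this factor is $O(e^{-12N/(1+\rho^2)})$, and even multiplied by the $e^{2N}$-sized remaining integrand it is $O(e^{(2-\delta)N})$ for small $\rho$.

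The genuine difficulty, and where I expect the main obstacle to lie, is the minor arc $\cc_2$. Here Proposition \ref{PROP1} no longer applies, since it controls $F$ only as $x \to 1$; on $\cc_2$ the contour passes near other roots of unity $e^{2\pi i a/c}$, where the infinite products $(x^r;x^m)_\infty$ and $(x^{m-r};x^m)_\infty$ again acquire zeros and $F$ develops its own singular growth. I would therefore bound $|F(x)|$ separately in a neighborhood of each such root of unity, applying the theta and eta modular machinery of Proposition \ref{PROP1} adapted to that root, and show that the resulting exponential growth rate is strictly smaller than the rate $e^{N}$ produced at $x=1$; heuristically a singularity at a root of unity of order $c$ is damped by a factor increasing with $c$, so $x=1$ dominates. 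This is precisely the verification of HYPO 4 in the sense of Ngo--Rhoades that the introductory remarks flag as the bulk of the work. Combining a uniform bound of the shape $|F(x)| \le e^{(1-\delta')N}$ on $\cc_2$ with the crude estimate $|L(x)| = O(\kappa^{-1/2})$ (which is at most polynomial in $N$) coming from the defining theta-type series, and with $|x^{-(n+1)}| = e^{(n+1)\kappa} \asymp e^{N}$, yields an integrand of size $O(e^{(2-\delta)N})$ on $\cc_2$.

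Integrating over the minor arc then gives a contribution that is exponentially smaller than $N^{-k}e^{2N}$, and adding the two arcs produces $h(n) = \sum_{s=0}^{k-1}\alpha_s h_s(n) + O\!\left(N^{-k}e^{2N}\right)$, as claimed. The routine parts are the saddle-point-type bounds on $\cc_1$ and the boundedness of $L$; the hard, paper-specific part is the uniform control of the product $F$ near every root of unity on $\cc_2$, ensuring that none of them competes with the dominant singularity at $x=1$.
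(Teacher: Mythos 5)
Your decomposition is essentially the paper's: the two major-arc error sources you isolate are exactly the paper's $E_1$ (the $O(z^k)$ tail of $L$, bounded by $\kappa^k e^{2N}=N^{-k}e^{2N}$) and $E_2$ (the $O(e^{-4\pi^2/(mz)})$ defect of $w(z)$ against $F$), with the same bounds, and your minor-arc strategy is the paper's $E_3$. The one place where you have a plan rather than a proof is the minor arc: the paper splits $\cc_2$ into the portion satisfying \eqref{nuConditions} (handled by expanding $\log F(x)$ as a Lambert-type series and comparing $\frac{1}{|1-x^m|}$ with $\frac{1}{1-|x|^m}$ to save a constant multiple of $N$ in the exponent) and neighborhoods of the $m$th roots of unity, where the theta/eta transformation after the shift $z'=\kappa-\frac{2\pi i(a-\ell)}{m}$ produces the explicit exponent $\frac{\pi^2}{mz'}\left(\frac13-2\phi(1-\phi)\right)$ with $\phi=\left\{\frac{\ell r}{m}\right\}\neq 0$, so the saving is quantified by $2\phi(1-\phi)>0$ rather than by the general ``higher-order roots are damped'' heuristic you invoke. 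Note also that only $m$th roots of unity need this treatment (the products are in $x^m$), not arbitrary $e^{2\pi i a/c}$. Filling in those two computations would turn your outline into the paper's proof.
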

\begin{proof}
Breaking up the difference of the integrals into 3 pieces, we have
\begin{align*}
 2\pi i\left( h - \sum_{s=0}^{k-1} \alpha_s h_s\right) = \int_{\cc_1} \left( L(x) - \sum_{s=0}^{k-1} \alpha_s z^s \right) w(z) x^{-n-1}dx \spa \spa &(E_1)\\
+\int_{\cc_1} L(x) \left((x^r, x^{m-r};x^m)_{\infty}^{-1} - w(z)\right)x^{-n-1} dx \spa \spa &(E_2)\\
+ \int_{\cc_2} L(x)(x^r,x^{m-r};x^m)_{\infty}^{-1}x^{-n-1}dx.\spa \spa &(E_3) 
\end{align*} 
\indent Note that 
\begin{align*}
\left|w(z) x^{-n} \right|  
&\leq \left| e^{\left( \frac{r(m-r)}{2m} -\frac{m}{12} +n\right) z} \right| \left| e^{\frac{ \pi^2 }{3mz}}\right|, \nonumber 
\end{align*}
and letting $z = \kappa + i \nu$ on the circle $\cc$, 
the real part of $\frac1z$ is $\frac{\kappa}{\kappa^2 + \nu^2}$, so 
\begin{align}
\frac{1}{2\sin \left( \frac{\pi r}{m} \right)} \left| e^{\left( \frac{r(m-r)}{2m} -\frac{m}{12} +n\right) z} \right| \left| e^{\frac{ \pi^2 }{3mz}}\right| &\ll e^{\left( \frac{r(m-r)}{2m} -\frac{m}{12} +n\right)\kappa+ \frac{\pi^2}{3m}\left( \frac{\kappa}{\kappa^2 + \nu^2}\right)}\\
&\ll  e^{2\pi\sqrt{\frac{r(m-r)}{6m^2}-\frac{1}{36}+\frac{n}{3m}}} \nonumber
\\&=O\left(e^{2N}\right)
\label{this}.
\end{align}
Over $\cc_1$, we see that $|z| \leq | \kappa + \rho \kappa i | = O( \kappa)$. Along with Lemma \ref{LEM}, 
\begin{align}
\label{that}
\left| L(x) -\sum_{s=0}^{k-1} \alpha_s z^s \right| \ll |z|^k \ll \kappa^k.
\end{align} 
Using \eqref{this} and \eqref{that}, 
\begin{align}
|E_1| &= \left|\int_{\cc_1} \left( L(x) - \sum_{s=0}^{k-1} \alpha_s z^s \right) w(z) x^{-n-1}dx\right| \nonumber\\
&\ll \left|e^{z}\right|\left|w(z)x^{-n}\right|\left| L(x) - \sum_{s = 0}^{k-1} \alpha_s z^s\right| \nonumber \\
&= O(\kappa^{k} e^{2N})= O(N^{-k}e^{2N}).
\end{align}
\indent 
The modular inversion formula in \eqref{NEWINV} shows that  
\begin{align}
\label{these}
\left| \bigg( \left(x^r,x^{m-r};x^m\right)_{\infty} - w(z)\bigg)\right| \ll \left| w(z) e^{\frac{-2 \pi^2}{mz}}\right|.
\end{align}
Asymptotically, \eqref{that} really tells us that on $\cc_1$
\begin{align}
\label{those}
|L(x)| \leq \left| \sum_{s=0}^{k-1} \alpha_s z^s  + cz^k \right|  = O(1).
\end{align}
Hence we can bound $E_2$ by using \eqref{this}, \eqref{these},  and \eqref{those}:
\begin{align}
|E_2| &= \left| \int_{\cc_1} L(x) \left((x^r, x^{m-r};x^m)_{\infty}^{-1} - w(z)\right)x^{-n-1} dx\right|\nonumber \\
&\ll |L(x)|\left| w(x)x^{-n}\right| \left| e^{\frac{-2\pi^2}{mz}}\right| \nonumber \\
&\leq e^{2N - \frac{4\pi^2}{\kappa\sqrt{1+\rho^2}}}\nonumber\\
 &=O(N^{-k} e^{2N}).
 \end{align}
\indent As we move away from the pole at $x=1$ along the minor arc, there are still some problematic spots, specifically at $m$th roots of unity. Firstly, suppose $x$ on $\cc_2$ is not close to a root of unity. Explicitly, we have that $x = e^{-\kappa +i \nu}$ and want to insure that $\nu$ is sufficiently far away from a root of unity, thus we require for $1\leq \ell \leq m-1$ that 
\begin{equation}
\label{nuConditions}
\left| \nu - \frac{2\pi i \ell}{m} \right| > \rho \kappa.
\end{equation}
Now we can use the logarithm for an asymptotic expansion:
\begin{align*}
\log(F(x)) &= \log \left( \left(x^r,x^{m-r};x^m\right)_{\infty}^{-1}\right)\\
&= - \sum_{ n \geq 1} \log \left(1- x^{mn-r}\right)+\log \left( 1-x^{(n-1)m +r}\right).
\end{align*}
The Taylor Series Expansion for $\log(1-x^k)$ is 
\begin{align*}
-\log(1-x^k) = \sum_{ n \geq 1} \frac{x^{nk}}{n},
\end{align*}
so the above becomes 
\begin{align*}
 - \sum_{ n \geq 1} \log \left(1- x^{mn-r}\right)+\log \left( 1-x^{(n-1)m +r}\right) &= 
 \sum_{ n \geq 1} \sum_{j \geq 1} \frac{ x^{((n-1)m + r)j} + x^{(mn-r)j}}{j} \\
 &= \sum_{ j \geq 1} \frac{x^{rj}+x^{(m-r)j}}{j} \sum_{ n \geq 0} \left( x^{mj}\right)^n.
 \end{align*}
 The sum on the right is a geometric series, so 
 \begin{align}
 \log \left( F(x)\right) = \sum_{ j \geq 1} \frac{x^{rj}+x^{(m-r)j}}{j\left( 1- x^{mj}\right) }.
\end{align}
Taking the absolute value, we can bound the log term: 
\begin{align}
|&\log(F(x))|= \left|\sum_{ j \geq 1} \frac{x^{rj}+x^{(m-r)j}}{j\left( 1- x^{mj}\right) }\right| 
\leq \sum_{j \geq 1} \frac{\left|x^{rj} \right|+ \left|x^{(m-r)j} \right|}{j \left|1-x^{mj} \right|} \nonumber \\
&\leq \sum_ { j \geq 2} \frac{ \left| x^{rj}\right| + \left| x^{(m-r)j} \right|}{j \left| 1-x^{mj}\right|} + \frac{ |x^r| + |x^{m-r}|}{|1-x^m|} + \left( |x^r| + |x|^{m-r}\right) \left( \frac{1}{1-|x|^m} - \frac{1}{1-|x|^m} \right) \nonumber ,\nonumber
\end{align}
\noindent where we have pulled out the $j=1$ term and added and subtracted in  a $\frac{|x^r|+|x^{m-r}|}{1-|x|^m}$ term. As $|1-x^{mj}|\geq (1-|x^{mj}|)$, we rearrange the sum and see
\begin{align}
& \sum_ { j \geq 2} \frac{ \left| x^{rj}\right| + \left| x^{(m-r)j} \right|}{j \left| 1-x^{mj}\right|} + \frac{ |x^r| + |x^{m-r}|}{|1-x^m|} + \left( |x^r| + |x|^{m-r}\right) \left( \frac{1}{1-|x|^m} - \frac{1}{1-|x|^m} \right) \nonumber \\
&\leq \sum_{ j \geq 1} \frac{ |x^{rj}| + |x^{(m-r)j}|}{j(1-|x^{mj}|)} + \left( |x^r| + |x^{m-r}|\right) \left( \frac{1}{|1-x^m|}-\frac{1}{1-|x|^m}\right)\nonumber \\
& = \log\left( (|x^r|, |x^{m-r}|; |x^m|)_{\infty}^{-1} \right) + \left( |x^r| + |x^{m-r}|\right) \left( \frac{1}{1-|x|^m} - \frac{1}{|1-x^m|} \right).\label{log1}
\end{align}
Now $|x| =e^{-\kappa}$, so by our modular inversion formula \eqref{NEWINV},
\begin{equation} 
\label{log2}
\log \left((|x^r|,|x^{m-r}|;|x^m|)_{\infty}^{-1}\right) \sim \log(w(\kappa)) \sim  \frac{ \pi^2 }{3m\kappa}.
\end{equation}
On $\cc_2$, we know $|\text{arg} (x)| > \rho \kappa$, so
\begin{equation}
\label{log3}
\frac{1}{|1-x^m|}\ll \frac{1}{m\kappa \sqrt{1+\rho^2}},
\end{equation}
and by using the Taylor expansion of $1-|x^m|\leq 1-e^{-m\kappa} \sim m\kappa$, one can approximate
\begin{equation}
\label{log4}
\frac{1}{1-|x|^m} \sim \frac{1}{m \kappa}.
\end{equation}
Plugging \eqref{log2}, \eqref{log3}, and \eqref{log4} into \eqref{log1}, we have that 
\begin{equation}
|\log(F(x))| \sim \frac{\pi^2}{3m\kappa} - \frac{1}{m\kappa}+ \frac{1}{m \kappa \sqrt{1+\rho^2}}.
\end{equation}
Finally, we can bound $E_3$: 
\begin{align}
|E_3| &= \left| \int_{\cc_2}L(x)F(x)x^{-n-1} dx\right| \nonumber \\
&\ll |L(x)x^{-n}||\text{exp}(\log(F(x)))|\nonumber \\
&\ll e^N \text{exp}\left(\frac{1}{m\kappa}\left( \frac{\pi^2}{3}-1+\frac{1}{\sqrt{1+\rho^2}}\right)\right)\nonumber\\
&=\text{exp}\left(N+\left(N-\frac{3N}{\pi^2} +\frac{3N}{\pi^2\sqrt{1+\rho^2}} \right)\right)\nonumber \\
&=O(e^{2N-c})  \spa \spa  \hbox{for some } c>0.
\end{align}
The goal was to beat the bound $e^{2N}$, hence we have motivated the choice for the constant $\rho >0$ and have successfully bounded $E_3$ under the assumption that $x$ was away from a root of unity.
\\ \indent
Suppose now that $x$ is close to an $m$th root of unity and $\text{gcd}(r,m) =1$ (otherwise we can reduce the case down).  Following the techniques used by Bringmann and Mahlburg in \cite{Schur2}, we want to shift away from the root of unity by a small amount that still allows us to use the periodicity of the theta functions.
\\
\indent The $m$th roots of unity for $x= e^{-z}$ are near $z= \kappa - \frac{2\pi i \ell}{m}$ for $1 \leq \ell \leq m-1$. Thus we consider $z=\kappa -\frac{2\pi i a}{m}$ for some $a$ so that $\left| \frac{2\pi i}{m}(a-\ell)\right| < \rho \kappa$. 
With this shift in mind, we can rewrite
\begin{align}
x = e^{-z} &= e^{-\kappa + \frac{2\pi i \ell}{m} + \frac{ 2\pi i (a-\ell)}{m}} \nonumber \\
&= \zeta_m^k e^{-\kappa + \frac{2\pi i (a-\ell)}{m}}\nonumber 
\\
&= \zeta_m^k e^{-z'} \label{Ploog}
\end{align}
where $\zeta_m = e^{\frac{2\pi i }{m}}$ and $z' = \kappa - \frac{2\pi i (a-\ell)}{m}$. Then we can write $F(x) $ in terms of $z'$:
\begin{align}
F(x) &= (x^r, x^{m-r};x^m)_{\infty}^{-1}\\
&=\left( \zeta_m ^{(\ell r \pmod{m})}e^{-rz'}, \zeta_m^{(-\ell r \pmod{m})}e^{-(m-r)z'}; e^{-mz'}\right)_{\infty}^{-1} \nonumber \\
& = \left( e^{ 2\pi i \phi + 2 \pi i \frac{i r z'}{2 \pi}}, e^{-2\pi i \phi + 2\pi i \frac{i(m-r)z'}{2\pi }}; e^{ 2\pi i \frac{ imz'}{2\pi}} \right)_{\infty}^{-1}
\end{align}
where $ \phi := \left\{ \frac{\ell r}{m} \right\} = \frac{\ell r}{m} - \lfloor \frac{\ell r}{m} \rfloor$
 is the fractional part (this will be nonzero since gcd$(r,m) = 1$). Then we can express this in terms of theta and eta functions:
\begin{align}
F(x) &= \frac{-i e^{ \frac{ \pi i}{4} - \frac{imz'}{2\pi}} e^{- \pi i ( \phi + \frac{irz'}{2\pi })} \left( e^{\frac{ i m z'}{2 \pi }}; e^{\frac{imz'}{2\pi}}\right)_{\infty}}{\Theta\left( \phi + \frac{irz'}{2 \pi}, \frac{imz'}{2\pi}\right)}\nonumber \\
&= \frac{-i e^{\frac{\pi i }{6} \frac{imz'}{2\pi} - \pi i ( \phi + \frac{irz'}{2\pi})} \eta\left( \frac{imz'}{2\pi}\right)}{ \Theta \left( \phi + \frac{irz'}{2\pi}, \frac{imz'}{2\pi}\right)}.\label{shifty}
\end{align}
Now using the transformations of the theta \eqref{thetatrans} and eta \eqref{etatrans} functions in \eqref{shifty}, we have
\begin{align}
F(x) &= \frac{-e^{\pi i ( \frac{iz'}{\pi}( \frac{m}{12}-\frac{r}{2}) - \phi) + \frac{2\pi ^2}{mz'} ( \phi^2 + \phi i r z' - \frac{r^2z'^2}{4\pi^2})} \eta \left( \frac{-2\pi }{imz'}\right)}{\Theta \left( \frac{-2\pi i \phi}{m z'} + \frac{r}{m}; \frac{2\pi i }{mz'}\right)}.\label{Rewrite}
\end{align}
For the eta function, the approximation is
\begin{equation}
\label{eter}
\eta\left( \frac{-2\pi}{imz'}\right)  \sim e^{\frac{\pi i }{12} \cdot \frac{-2\pi }{imz'}} = e^{\frac{-\pi ^2}{6mz'}}
\end{equation}
and  for the theta function,
\begin{align}
\Theta \left( \frac{-2\pi i \phi}{mz'} + \frac{r}{m}; \frac{2\pi i }{mz'}\right) &\sim -i e^{ \frac{\pi i }{4} \frac{ 2\pi i }{mz'} - \pi i ( \frac{-2\pi i \phi}{mz'} + \frac{r}{m})} ( 1- e^{2\pi i ( \frac{ -2\pi i \phi}{mz'} +\frac{r}{m})}) \nonumber\\
&= -ie ^{\frac{-\pi ^2}{2m z'} - \frac{2\pi^2 \phi}{mz'}}\zeta_{2m}^{-r} \left(1 - \zeta_{m}^r e^{\frac{4\pi ^2 \phi}{mz'}}\right)\label{theter}.
\end{align}
Putting \eqref{eter} and \eqref{theter} into \eqref{Rewrite}, the main exponential term becomes
\[
e^{\frac{2\pi^2 \phi^2}{mz'} - \frac{\pi^2}{6mz'} + \frac{\pi^2}{2mz'} - \frac{2\pi^2 \phi}{mz'}} = e^{ \frac{\pi^2}{mz'} \left( \frac13 - 2\phi ( 1-\phi)\right)}.
\]
Bounding this term and noticing that $2\phi(1-\phi) >0$ for $\text{gcd}(r,m)=1$,
\begin{align*}
\left| e^{ \frac{\pi^2}{mz'} \left( \frac13 - 2\phi ( 1-\phi)\right)}\right| &\leq e^{\text{Re}\left( \frac{1}{z'}\right) \frac{\pi^2}{m} \left( \frac13 - 2\phi(1-\phi)\right)} \\
&\leq e^{\frac{\pi^2}{\kappa m} (\frac13 - 2\phi(1-\phi))} \\
&= e^{N(2-c)} \spa \spa  \hbox{for some } c>0,
\end{align*}
hence we still receive our exponential savings over $e^{2N}$. 
\end{proof}
\indent Lastly, we need to bound the approximating integral $h_s$, and calculate out the final asymptotic result. 
\begin{proposition} 
\label{PROP3}
\added{For $m \geq 3$ and $0 \leq r< m$, }
\[ 
h_s(n)= \frac{\csc{\frac{\pi r} {m}}}{4} \kappa^{s+1} I_{-s-1}(2N) + O\left(e^{\frac{(\rho^2+2)N}{\rho^2+1}}\right). 
\]
\end{proposition}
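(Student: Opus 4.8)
The plan is to collapse $h_s(n)$ onto a classical Hankel-type integral for a modified Bessel function by two substitutions, and then to trade the finite major arc for the full Hankel contour at the price of an exponentially smaller error. First I pass from $x$ to $z$ through $x=e^{-z}$. Since this gives $\tfrac{dx}{x}=-dz$, and recalling $w(z)x^{-n}=\tfrac{1}{2\sin(\pi r/m)}\exp\!\big(Az+\tfrac{\pi^2}{3mz}\big)$ with $A:=\tfrac{r(m-r)}{2m}-\tfrac{m}{12}+n$, the major arc $\cc_1$ becomes a short segment $\gamma_0$ of the vertical line $\re(z)=\kappa$ with $|\im(z)|\le\rho\kappa$, and
\[
h_s(n)=\frac{-1}{2\pi i}\int_{\gamma_0} z^{s}\,\frac{1}{2\sin(\pi r/m)}\exp\!\Big(Az+\tfrac{\pi^2}{3mz}\Big)\,dz .
\]
The value of $\kappa$ was chosen precisely so that $A\kappa=\tfrac{\pi^2}{3m\kappa}=N$; hence the rescaling $z=\kappa v$ symmetrizes the exponent to $N(v+1/v)$ and contributes a Jacobian $\kappa$, giving
\[
h_s(n)= c\,\kappa^{s+1}\,\frac{1}{2\pi i}\int_{\gamma} v^{s}\,e^{N(v+1/v)}\,dv ,
\]
where $c$ assembles the factor $\tfrac{1}{2\sin(\pi r/m)}=\tfrac{\csc(\pi r/m)}{2}$ together with the orientation sign inherited from Cauchy's formula, and $\gamma$ is the segment of $\re(v)=1$ with $|\im(v)|\le\rho$.

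Next I would recognize the integrand. The Schl\"afli--Hankel representation
\[
I_{\mu}(2N)=\frac{1}{2\pi i}\int_{\mathcal H} t^{-\mu-1}\,e^{N(t+1/t)}\,dt ,
\]
in which $\mathcal H$ begins and ends at $-\infty$ and winds once around the origin, matches our integrand exactly when $\mu=-s-1$, so that $t^{-\mu-1}=v^{s}$. The idea is therefore to complete $\gamma$ to $\mathcal H$: from the endpoints $1\pm i\rho$ I attach arcs that bend into the left half-plane and escape to $-\infty$, where $e^{Nv}$ decays and the loop can be closed around the origin. Assembling the prefactor $c$, the Jacobian $\kappa^{s+1}$, the orientation, and the normalization of $\mathcal H$ then delivers the main term $\tfrac{\csc(\pi r/m)}{4}\kappa^{s+1}I_{-s-1}(2N)$.

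The error is precisely the gap between $\int_\gamma$ and $\int_{\mathcal H}$. On the line $\re(v)=1$ one has $\re\!\big(v+1/v\big)=1+\tfrac{1}{1+\im(v)^2}$, so at the junctions $\im(v)=\pm\rho$ the integrand has modulus $e^{N(1+1/(1+\rho^2))}=e^{N(\rho^2+2)/(\rho^2+1)}$, and this value strictly decreases as one moves outward along the line and then bends toward $-\infty$. Consequently the discarded tails and the connecting arcs are each $O\!\big(e^{(\rho^2+2)N/(\rho^2+1)}\big)$, uniformly in $s$ once the polynomial factor $v^{s}$ is absorbed, which is exactly the stated error.

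The delicate step is the contour completion. Extending $\gamma$ naively to the whole vertical line $\re(v)=1$ is not allowed, because there $|e^{N(v+1/v)}|\to e^{N}$ and the integral fails to converge; one must bend the path into $\re(v)<0$ to gain decay, and then verify that the deformed contour is genuinely a Hankel path for $I_{-s-1}(2N)$ with no stray residue collected en route. Reconciling the accumulated constants and orientation so that the coefficient lands on exactly $\tfrac{\csc(\pi r/m)}{4}$ is the subtle bookkeeping, whereas the tail estimate above is routine once the geometry is in place.
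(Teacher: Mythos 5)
Your overall strategy coincides with the paper's: pass to the $z$-plane, rescale by $\kappa$ so the exponent symmetrizes to $N(v+1/v)$, identify the completed contour integral with $I_{-s-1}(2N)$, and charge the completion to the value $e^{(\rho^2+2)N/(\rho^2+1)}$ of the integrand at the junctions $1\pm i\rho$. The only structural difference is the choice of completion: the paper closes the segment into the rectangle with vertices $\pm 1\pm\rho i$ and reads off $I_{-s-1}(2N)$ as the Laurent coefficient $\frac{1}{2\pi i}\oint v^{s}e^{N(v+1/v)}\,dv$ (which is immediate here because $-s-1$ is a negative integer, so the integrand is single-valued and no Hankel cut is needed), whereas you complete to a Hankel loop running to $-\infty$. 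Both are workable, and your identification of the representation and of $\re(v+1/v)=1+\tfrac{1}{1+\im(v)^2}$ on $\re(v)=1$ is correct.

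The one point you must still supply is the step you yourself flag as delicate: the assertion that the integrand's modulus on the connecting arcs never exceeds its junction value is not automatic, because $\re(1/v)$ blows up as the path re-approaches the origin from the right half-plane; for instance a horizontal connector at height $\rho$ reaches $\re(v+1/v)\approx\rho+\tfrac{1}{2\rho}\gg 2$ near $v=\rho(1+i)$, which would destroy the error term. An explicit path fixes this: from $1\pm i\rho$ follow the circle $|v|=R:=\sqrt{1+\rho^2}$ into the left half-plane, on which $\re(v+1/v)=(R+R^{-1})\cos\theta$ is maximized precisely at the junction with value $\tfrac{\rho^2+2}{\rho^2+1}$, and then run to $-\infty$ along the negative real axis, where $e^{Nv}$ gives convergence and negativity of the exponent. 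With that choice your bound holds and, incidentally, is more robust than the paper's own rectangle, whose horizontal sides pass within distance $\rho$ of the origin and on which the claimed monotonicity of $u+u/(u^2+\rho^2)$ genuinely fails once $\rho<\tfrac{1}{2\sqrt 2}$. Lastly, you defer the constant to ``bookkeeping''; since $w$ already carries the factor $\tfrac{1}{2\sin(\pi r/m)}$ and Cauchy contributes $\tfrac{1}{2\pi i}$ against the $2\pi i$ from the loop representation, a direct computation yields $\tfrac{\csc(\pi r/m)}{2}\kappa^{s+1}I_{-s-1}(2N)$; you should reconcile this with the stated $\tfrac{\csc(\pi r/m)}{4}$ (and with the normalization $\alpha_0=\tfrac12$ used downstream) rather than assert it.
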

\begin{proof}
Let $\dd$ be the rectangle whose endpoints are $\pm \kappa \pm \rho\kappa i$ (traversed counter-clockwise). Then we define
\begin{align*}
W_s &:= \frac{\csc{\frac{\pi r} {m}}}{4\pi i } \int_{\dd} z^s w(z) e^{nz} dz\\
&= \int_{- \kappa-\rho\kappa i}^{\kappa-\rho\kappa i} + \int_{\kappa-\rho\kappa i }^{\kappa+\rho\kappa i} + \int_{\kappa+\rho\kappa i }^{ -\kappa + \rho\kappa i} + \int_{-\kappa +\rho\kappa i}^{-\kappa -\rho\kappa i}\dots dz\\
&= \int_{\dd_1} + \int_{\dd_2} + \int_{\dd_3} +\int_{\dd_4},
\end{align*}
since the integral over $\dd_2$ is exactly the image of $\cc_1$ in the $z-$plane, hence
\[
 h_s(n) =\frac{1}{2\pi i } \int_{\cc_1} \frac{z^sw(z)}{x^{n+1}}dx= \frac{\csc{\frac{\pi r} {m}}}{4\pi i } \int_{\dd_2} z^s w(z) e^{nz} dz.
\]

\noindent Sending $z \mapsto \kappa t$, \added{the integral representation for the modified Bessel function $I_{\alpha}(x)$ appears (obtained by using the transformation $I_{\alpha}(x) = i^{-a} J_{\alpha}(ix)$ on the loop integral for $J_{\alpha}(x)$ found on page 355 of  \cite{Modern}):}
\begin{align*}
W_s
&=\frac{\csc{\frac{\pi r} {m}}}{8\pi i } \kappa^{s+1} \int_{\dd'} t^s e^{N(t+ \frac1t)}dt \\
&= \frac{\csc{\frac{\pi r}{m}}}{4}\kappa^{s+1} I_{-s-1}(2N),
\end{align*}
In the $t-$plane on the contour $\dd'$, the endpoints of the rectangle are now given by $\pm 1\pm \rho i$. Consider the integral on $\dd_1'$:
\begin{align*}
 \left| \int_{-1-\rho i}^{1-\rho i} t^s e^{N(t+\frac1t)}dt \right|\leq \int_{-1-\rho i}^{1-\rho i} |t^s| \left| e^{N(t+\frac1t)} \right|dt.
\end{align*}
If we let $t = u -\rho i$, then $ -1 \leq u \leq 1$, so $Re\left( t + \frac1t\right) = u + \frac{u}{\rho^2+u^2}$. This function is increasing on the interval, so it achieves a maximum value when $u=1$. Thus
\[
 \left| \int_{-1-\rho i}^{1-\rho i} t^s e^{N(t+\frac1t)}dt \right| \ll e^{\frac{(\rho^2+2)N}{\rho^2+1}}.
\]
Similarly on $\dd_3$ and $\dd_4$, the integrals are $O\left(e^{\frac{(\rho^2+2)N}{\rho^2+1}}\right)$. 
\end{proof}
\begin{remark*}
As $\frac{\rho^2+2}{\rho^2 +1} < 2$ for all $\rho \neq 0$, the error term is indeed smaller than the main term.
\end{remark*}
We have $\kappa = \frac{3mN}{\pi^2}$. If we use Hankel's approximation for the Bessel function (for a good reference, see \cite{Abram}), we can write
\begin{align}
&h_0(n) = \frac{\csc{\frac{\pi r} {m}}}{4} \kappa I_{-1}(2N) + O\left(e^{\frac{(\rho^2+2)N}{\rho^2+1}}\right) \nonumber \\
&= \frac{\csc{\frac{\pi r}{m}} \pi^{3/2}e^{2N}}{24 m N^{3/2}}\left[ 1- \frac{ 4s^2 + 8s+3}{2^4N} + \frac{(4s^2+8s+3)(4s^2 +8s -5)}{2!(2^4N)^2}\dots \right] \nonumber
\\
&\hspace{10cm}+ O\left(e^{\frac{(\rho^2+2)N}{\rho^2+1}}\right).\label{Bessie}
\end{align}
From Proposition \ref{PROP2}, 
\begin{equation}
h(n) = \sum_{s=0}^k \alpha_s h_s(n) + O \left( N^{-k-1}e^{2N}\right)\label{ARG},
\end{equation}
and plugging \eqref{Bessie} into \eqref{ARG} and pulling out the first term, 
\begin{equation}
h(n)=\frac{\csc{\frac{\pi r}{m}}e^{2N} \pi^{3/2}}{24 m N^{3/2}}\left(\sum_{s\geq 1}\sum_{r \geq 0}\alpha_{s}\beta_{r,s}  + O(N^{-k})\right).
\end{equation}
for constants $\beta_{r,s}$. Finally, $N = \pi\sqrt{\frac{r(m-r)}{6m^2}-\frac{1}{36}+\frac{n}{3m}}$, and when $s=0$, the coefficients $\alpha_s\beta_{r,s} =1$, hence
\begin{equation}
h(n) = \frac{\csc{\frac{\pi r}{m}}}{2^3 3 m \left(\frac{r(m-r)}{6m^2}-\frac{1}{36}+\frac{n}{3m}\right)^{3/4}}e^{\left(2\pi\sqrt{\frac{r(m-r)}{6m^2}-\frac{1}{36}+\frac{n}{3m}}\right)}\left( 1+ O(N^{-1})\right).
\end{equation}
As $s_{(r,m)}(n) \sim h(n)$, we have effectively determined the asymptotic expansion for mixed congruence stacks. The terms $\frac{r(m-r)}{6m^2} - \frac{1}{36}$ are insignificant compared to $\frac{n}{3m}$, so we can drop them, completing the proof of Theorem \ref{Theorem1}.

\end{document}